\documentclass[12pt]{article}
\usepackage{amsmath, verbatim}
\usepackage{amssymb}
\usepackage{amsthm}
\usepackage{amscd}
\usepackage{amsfonts}
\usepackage{graphicx}
\usepackage{fancyhdr}
\usepackage{epsfig}
\usepackage{makeidx}
\usepackage{setspace}
\usepackage{tikz}
\usepackage{pstricks}

\pagestyle{plain}


\theoremstyle{plain} \numberwithin{equation}{section}
\newtheorem{theorem}{Theorem}[section]
\newtheorem{corollary}[theorem]{Corollary}

\newtheorem{proposition}[theorem]{Proposition}

\theoremstyle{definition}
\newtheorem{definition}[theorem]{Definition}

\newtheorem{remark}[theorem]{Remark}
\newtheorem{example}[theorem]{Example}
\newtheorem{question}{Question} 
\newtheorem*{acknowledgements}{Acknowledgements}

\newcommand{\mbf}[1]{\mathbf{#1}}

\newcommand{\ds}{{\displaystyle}}

\newcommand{\mdeg}{{\textup{mdeg}}}

\DeclareMathOperator{\LCM}{\mathbf{LCM}}

\DeclareMathOperator{\lcm}{lcm}

\title {Rigid monomial ideals}
\author {Timothy B.P. Clark and Sonja Mapes}

\begin{document}

\maketitle 
\begin{abstract}
In this paper we investigate the class of rigid monomial ideals.  We give a characterization 
of the minimal free resolutions of certain classes of these ideals.  
Specifically, we show that the ideals in a particular subclass of rigid monomial ideals 
are lattice-linear and thus their minimal resolution can be 
constructed as a poset resolution.  We then use this result to give 
a description of the minimal free resolution of a larger class of rigid monomial ideals 
by using $\mathcal{L}(n)$, the lattice of all lcm-lattices of monomial ideals 
with $n$ generators.  By fixing a stratum in $\mathcal{L}(n)$ 
where all ideals have the same total Betti numbers we show that 
rigidity is a property which is upward closed in $\mathcal{L}(n)$.  
Furthermore, the minimal resolution of all rigid ideals contained 
in a fixed stratum is shown to be isomorphic to the constructed minimal resolution.
\end{abstract}

\section*{Introduction} 
  
Giving a constructive method for finding the minimal free resolution of a monomial ideal 
is a question which has motivated a wide variety of projects in commutative algebra.  
Various methods are known for computing the multigraded Betti numbers in the general case, 
and there are numerous strategies for constructing the maps in free resolutions.  
Despite this diversity of results it is still not known how to construct the maps 
in a minimal free resolution except for certain subclasses of monomial ideals (generic and Borel are examples).    

In this paper we explore the class of \emph{rigid} monomial ideals which were introduced to us by Ezra Miller \cite{MilPev}.  By definition, a rigid ideal has 
the following two properties; (R1) every nonzero multigraded Betti number equals 1 and (R2) multigraded Betti numbers which are nonzero in the same homological 
degree correspond to incomparable monomials in the lcm-lattice.  
Rigid ideals include as subclasses the generic monomial ideals 
and the monomial ideals whose minimal resolution is supported on their Scarf complex.  
Furthermore, rigid ideals are a generalization of the monomial ideals in three variables whose minimal resolution may 
be constructed using a rigid embedding, introduced by Miller \cite{Miller}.

Theorem \ref{uniqueRes}, which was communicated along with the definition \cite{MilPev}, characterizes rigid monomial ideals as having a unique finely graded minimal resolution 
up to independently rescaling the finely graded basis vectors.  As such, the maps in such a minimal resolution should be explicitly describable.  Our interest in studying rigid ideals stems from a desire to identify combinatorial objects which encode the data of these maps.  We propose that the lcm-lattice or more specifically the subposet of the lcm-lattice consisting of Betti degrees is the appropriate combinatorial object.  The techniques of this paper allow us to prove such a statement for a subclass of rigid monomial ideals. 
We give a detailed description of the obstruction to using these new methods on all rigid monomial ideals. 

More precisely, given that the minimal resolution of a rigid monomial ideal is unique up to scaling, and that the Betti numbers can be computed from the lcm-lattice it seems that a description of the resolution's maps using the relations in the lcm-lattice should be possible.  We aim toward this goal by taking advantage of the construction described by the first author in \cite{Clark}.  This construction takes as its input the lcm-lattice of a monomial ideal and produces an approximation to the minimal free resolution of the given ideal.  In the case when the minimal free resolution is indeed obtained, the ideal is said to be \emph{lattice-linear}.  More generally, if a multigraded poset is used as input for this construction and the resulting sequence is an exact complex of multigraded modules, it is called a \emph{poset resolution}.  

In this paper we focus attention on a subclass of rigid ideals which we call \emph{concentrated}.  In general, an ideal is said to be concentrated if its Betti (contributing) multidegrees are less than all the multidegrees in the lcm-lattice which are not Betti multidegrees.  In particular, Theorem \ref{concentratedLatticeLinear} states that a rigid monomial ideal is concentrated if and only if it is lattice-linear.  We therefore construct a minimal poset resolution for the class of concentrated rigid monomial ideals.  

The remainder of the paper focuses on the development of a method for 
transferring the resolution information of a rigid monomial ideal to 
related ideals which have the same total Betti numbers.  In particular, we 
consider rigid ideals in relation to their neighbors in $\mathcal{L}(n)$, 
which is the set of all finite atomic lattices (or lcm-lattices) with $n$ ordered atoms.  
In \cite{phan}, Theorem 4.2 shows that under an appropriate partial order, $\mathcal{L}(n)$ is 
itself a finite atomic lattice.  Furthermore, Theorem 3.3 in \cite{GPW} implies that total Betti numbers 
increase along chains in $\mathcal{L}(n)$.  It is therefore natural to study subposets of $\mathcal{L}(n)$ 
with fixed total Betti numbers, which we refer to individually as Betti stratum.  
Within a fixed Betti stratum it is useful to describe families of monomial ideals within 
these subposets with isomorphic minimal resolutions (while allowing non isomorphic lcm-lattices).  Precisely, 
Theorem \ref{resInRigidFilter} states that any two rigid monomial ideals in a fixed Betti stratum 
whose lcm-lattices are comparable in $\mathcal{L}(n)$ must have isomorphic minimal resolutions.  
This allows us to construct a minimal poset resolution for a larger class of rigid monomial ideals than 
guaranteed by Theorem \ref{concentratedLatticeLinear}.

The final section of this paper discusses precisely how the methods of this paper can fail for 
rigid monomial ideals which are not concentrated.  It should be noted that this failure is theoretical, 
for we have yet to construct examples of rigid monomial ideals whose minimal resolution 
cannot be constructed as a poset resolution on the subposet of Betti degrees in the lcm-lattice.

\begin{acknowledgements}
We would like to thank Ezra Miller and Irena Peeva for comments which
helped improve the clarity of this paper and for their fundamental definitions and observations for without which this paper would not exist.
\end{acknowledgements}

\section{Rigid monomial ideals}\label{IntroRigid}
Let $R=\Bbbk[x_1,\ldots,x_d]$.  We write 
$\mbf{x}^\mbf{a}=x_1^{a_1}\cdots x_d^{a_d}$ for a monomial in 
$R$ and investigate properties of ideals in $R$ which are 
generated by monomials.  

Recall that for a monomial ideal $M$ whose set of minimal generators is $G(M)=\{m_1,\ldots,m_n\}$, 
the finite atomic lcm-lattice $\LCM(M)$ has the monomials of $G(M)$ as its atoms and 
the least common multiples of $m_1,\ldots,m_n$ as its ground set.  The monomials in $\LCM(M)$ are 
ordered by divisibility, the maximal element of $\LCM(M)$ is $\lcm(m_1,\ldots,m_n)$ 
and the minimal element is 1, considered as the lcm of the empty set of monomials.  
Gasharov, Peeva and Welker first defined this combinatorial object in \cite{GPW} and derived 
a formula for the multigraded Betti numbers of $M$ based on the homology of its open intervals.   

The following definition was communicated to the second author by Miller \cite{MilPev}.  

\begin{definition}\label{RigidDef}
A rigid monomial ideal is a monomial ideal whose multigraded 
Betti numbers satisfy the following two properties:
\begin{enumerate}
\item[(R1)] $\beta_{i,\mbf{b}}$ is either 1 or 0 for all $i$ and all multidegrees $\mbf{b}$.
\item[(R2)] If $\beta_{i,\mbf{b}} = 1$ and $\beta_{i,\mbf{b'}} = 1$ 
						then $\mbf{x^b}$ and $\mbf{x^{b'}}$ are not comparable in $\LCM(M)$, the lcm-lattice of $M$.
\end{enumerate}
\end{definition}

Note that since (multigraded) Betti numbers are dependent on the characteristic of the field $\Bbbk$, the condition of being 
rigid is as well.  In fact, the well-known example of the monomial ideal arising from a triangulation 
of the real projective plane is rigid in characteristic other than 2 and not rigid when the characteristic is 2.

\begin{example}
The monomial ideal $M = (a^2, ab, b^2)$ is rigid since the first syzygies $a^2b$ and 
$ab^2$ are not comparable in $\LCM(M)$, whereas the monomial ideal $N= (bc, ac, a^2b)$ is not 
rigid since the first syzygies $abc$ and $a^2bc$ are comparable in $\LCM(N)$.  

\begin{figure}
\center
\includegraphics[scale=0.5]{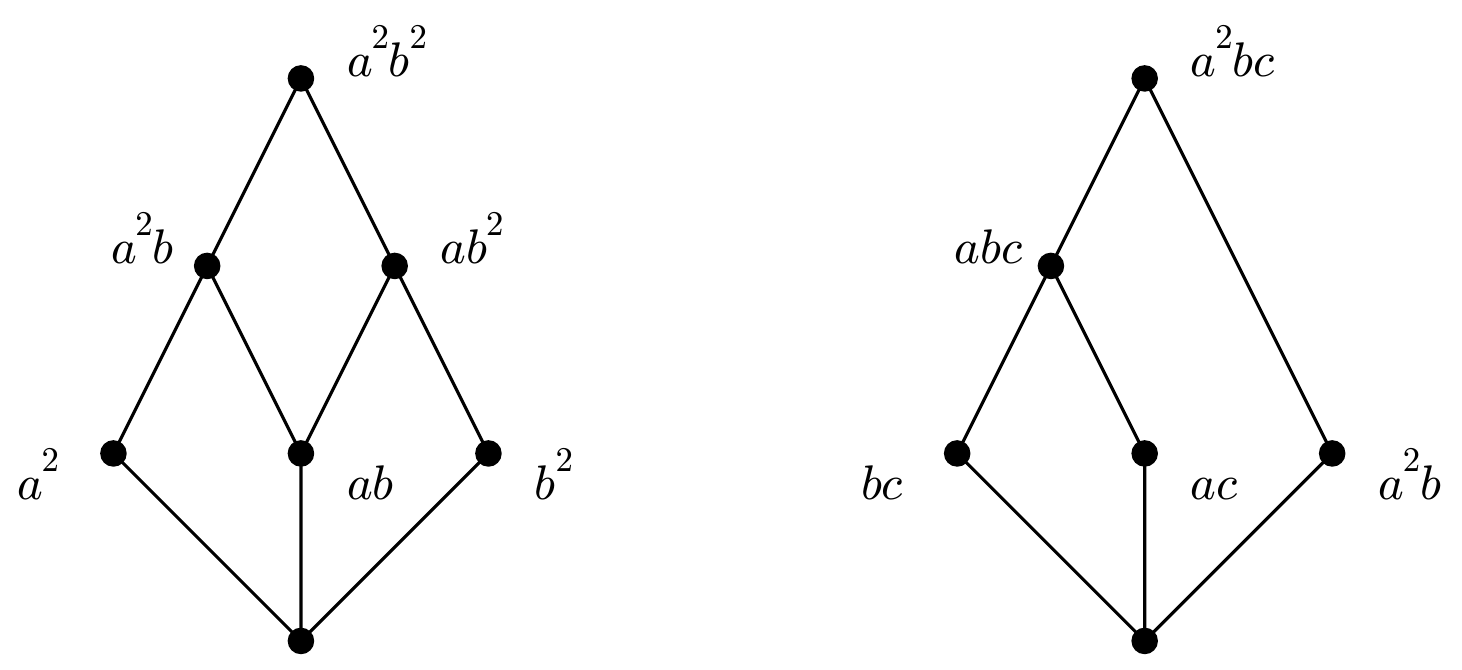}
\caption{lcm-lattices of monomial ideals $M$ (rigid) and $N$ (non-rigid)}\label{rigidExamples}
\end{figure}

\end{example}

Our main interest in studying rigid monomial ideals follows 
from the fact that their minimal resolutions are unique up to scaling.  
To state precisely and prove this property, we first discuss 
the automorphisms of minimal resolutions of a multigraded module.

\begin{definition} 
Let $\mathcal{F}$ be a multigraded 
free resolution of a multigraded module. An \emph{automorphism} 
of $\mathcal{F}$ is a collection of multigraded (degree 0) isomorphisms 
$f_i:F_i\rightarrow F_i$ which has the property that  
$d_i'\circ f_i=f_{i-1}\circ d_i$ for every $i\ge 1$.  Here, 
$\{d_i\}$ and $\{d_i'\}$ are the representatives of the differential of 
$\mathcal{F}$ which come as a result of distinct basis choices.  
\end{definition} 

For an arbitrary monomial ideal, the isomorphism $f_i$ may be realized as 
an element of $GL_{\beta_i}(R)$ for every $0 \le i \le p$  
and as such, the automorphism group of $\mathcal{F}$ 
is a subgroup of $\ds\bigoplus_{i=0}^p GL_{\beta_i}(R)$.  
We say that an automorphism of $\mathcal{F}$ is \emph{trivial} 
if the maps representing the isomorphisms $\{f_i\}$ are 
diagonal matrices with units of $\Bbbk$ along the diagonal.  
The following characterization of rigid monomial ideals 
was also communicated to the second author by Miller \cite{MilPev} without proof.  We restate the result here and provide our own proof since it does not appear in the literature.

\begin{proposition}\label{uniqueRes}
The automorphisms of the minimal resolution of $R/M$ are 
trivial if and only if $M$ is rigid.
\end{proposition}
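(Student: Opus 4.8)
The plan is to prove both directions by carefully analyzing what the conditions (R1) and (R2) impose on the matrices $f_i \in GL_{\beta_i}(R)$ that constitute an automorphism. The key structural fact I would exploit is that since the resolution is multigraded and each $f_i$ is a degree-$0$ isomorphism, the $(j,k)$ entry of $f_i$ must be a homogeneous element of $R$ whose multidegree equals $\mbf{b}_j - \mbf{b}_k$, where $\mbf{b}_j$ and $\mbf{b}_k$ are the multidegrees of the $j$-th and $k$-th basis vectors of $F_i$. I would begin by recording this observation precisely, noting that a nonzero off-diagonal entry forces the existence of a monomial dividing relation $\mbf{x}^{\mbf{b}_k} \mid \mbf{x}^{\mbf{b}_j}$ with the two multidegrees distinct, and that a diagonal entry must have multidegree $\mbf{0}$ and hence be a unit (a nonzero scalar in $\Bbbk$). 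This translates the problem into a statement purely about which multidegrees can appear together in a single homological degree.

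For the direction ``rigid $\Rightarrow$ trivial,'' I would argue as follows. Fix a homological degree $i$. By (R1), each multidegree $\mbf{b}$ with $\beta_{i,\mbf{b}} = 1$ corresponds to exactly one basis vector of $F_i$, so distinct basis vectors in $F_i$ carry distinct multidegrees. By (R2), any two such multidegrees are incomparable in $\LCM(M)$, meaning neither monomial divides the other; hence for $j \neq k$ the difference $\mbf{b}_j - \mbf{b}_k$ is never the multidegree of a nonzero monomial, forcing every off-diagonal entry of $f_i$ to vanish. Thus $f_i$ is diagonal, and since its diagonal entries have multidegree $\mbf{0}$ and $f_i$ is an isomorphism over the multigraded local-type situation, these entries are units of $\Bbbk$. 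This shows the automorphism is trivial.

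For the converse, ``trivial automorphisms $\Rightarrow$ rigid,'' I would argue by contrapositive: assuming $M$ is not rigid, I produce a nontrivial automorphism. If (R1) fails, some $\beta_{i,\mbf{b}} \ge 2$, so $F_i$ has at least two basis vectors in the same multidegree $\mbf{b}$; then any nonidentity element of $GL_2(\Bbbk)$ acting on that multigraded piece (extended by the identity elsewhere) gives a degree-$0$ automorphism that is not a diagonal unit matrix, provided it can be lifted to a genuine automorphism of the whole resolution. If (R2) fails, there exist comparable multidegrees $\mbf{b}, \mbf{b}'$ in the same homological degree, say $\mbf{x}^{\mbf{b}'} \mid \mbf{x}^{\mbf{b}}$ with $\mbf{b} \neq \mbf{b}'$; then the monomial $\mbf{x}^{\mbf{b} - \mbf{b}'}$ is a legal off-diagonal entry, and I would use it to build an elementary (transvection-type) automorphism with a nonzero off-diagonal entry, again nontrivial. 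In both cases the main content is showing that these candidate maps $f_i$ actually extend to a full automorphism, i.e. that the compatibility $d_i' \circ f_i = f_{i-1} \circ d_i$ can be satisfied by an appropriate choice of the remaining $f_j$.

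The hard part will be this last extension step in the converse: exhibiting a change of basis that is nontrivial at one spot yet commutes with all the differentials. For the (R2)-failure case I expect the cleanest approach is to realize the new basis as a genuine rescaling/shearing of the minimal generating set of each $F_i$ and to check directly that the induced maps on the remaining free modules remain multigraded isomorphisms; because the entries we introduce are forced by multidegree to be the ``correct'' monomials, the commuting relations should hold after possibly adjusting neighboring $f_j$ by the matrices that conjugation through $d_i$ demands. I would handle the (R1)-failure case by noting that the freedom in choosing a minimal generating set within a single repeated multidegree is exactly an action of $GL_2(\Bbbk)$ (or $GL_{\beta_{i,\mbf{b}}}(\Bbbk)$) that manifestly commutes with the differentials since it permutes/recombines basis elements of identical multidegree, and such a recombination is never diagonal-with-$\Bbbk$-units unless it started that way. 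Verifying that these local modifications propagate consistently through the entire complex is the crux, and I would expect to lean on the uniqueness of minimal resolutions up to (non-canonical) isomorphism to guarantee that any such basis change is induced by a legitimate automorphism.
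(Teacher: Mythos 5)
Your proof takes essentially the same route as the paper's. The forward direction is the same multidegree bookkeeping: the paper phrases it as ruling out the three types of elementary column operations (interchange, scaling by a ring element, transvection), while you argue entry-by-entry, but the content is identical --- (R1) forces distinct multidegrees in each $F_i$ so no permutations, (R2) forces non-divisibility so every off-diagonal entry of $f_i$ must vanish for degree reasons, and diagonal entries of multidegree $\mbf{0}$ are units of $\Bbbk$. Likewise your converse-by-contrapositive constructions (a $GL_{\beta_{i,\mbf{b}}}(\Bbbk)$ recombination when (R1) fails, a monomial transvection with entry $\mbf{x}^{\mbf{b}-\mbf{b'}}$ when (R2) fails) are exactly the elementary matrices the paper's converse invokes.

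The ``crux'' you flag at the end --- verifying that your local modification extends to a full automorphism satisfying $d'_i\circ f_i=f_{i-1}\circ d_i$ --- is in fact a non-issue, and recognizing this would have let you close the argument. In the paper's definition, $\{d_i\}$ and $\{d'_i\}$ are representatives of the differential of $\mathcal{F}$ arising from two \emph{distinct basis choices}; the differential is not held fixed. Consequently, any collection of multigraded degree-$0$ isomorphisms $f_i:F_i\to F_i$ constitutes an automorphism, with $d'_i:=f_{i-1}\circ d_i\circ f_i^{-1}$ the representative of the differential in the new basis --- there is nothing to lift or propagate. (Had the differential been fixed, your worry would be real, and indeed the statement would be delicate; it is precisely the flexibility of $d'$ that licenses the paper's own rather terse converse.) Two minor corrections: a nonidentity element of $GL_2(\Bbbk)$ can still be trivial in the paper's sense (e.g.\ a nonscalar diagonal matrix with unit entries), so in the (R1) case you must choose a nondiagonal element such as a swap or a unipotent; and your convention for entry multidegrees is reversed --- if columns index the source, the $(j,k)$ entry is homogeneous of multidegree $\mbf{b}_k-\mbf{b}_j$, so a nonzero off-diagonal entry forces $\mbf{x}^{\mbf{b}_j}$ to divide $\mbf{x}^{\mbf{b}_k}$ --- though this sign issue is immaterial to the argument.
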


\begin{proof}

Write $\mathcal{F}$ for the minimal free resolution of $R/M$.  
To consider automorphisms on $\mathcal{F}$ we address on 
a case by case basis the automorphisms of the free module $F_i$ which are possible.  
The fact that the maps $f_i$ must commute with the differentials gives 
the condition $$d_i = f_{i-1}^{-1} \circ d'_i \circ f_{i}$$ for $1\le i\le p$ .  
As such, we consider how each $f_i$ acts on $d'_i$.

The minimal free resolution of an arbitrary monomial ideal has $F_0$, a 
free module of rank one, appearing in homological degree zero of its minimal resolution.  
This module has a basis element of multidegree $\mbf{0}=(0,\ldots,0)$  
and as such, any automorphism $f_0$ is represented by 
a $1\times 1$ matrix whose entry is a unit of $\Bbbk$.  

Since the $r$ minimal generators of a monomial ideal are unique, the free 
module $F_1$ consists of $r$ free modules of rank one whose individual 
shifts match the multidegrees of the monomial generators.  
Any automorphism of the free module $F_1$ therefore may only send 
a basis element of multidegree $\mbf{a}$ to a scalar multiple of itself.  
We therefore see that that there is only one choice for $d_1$ (up to scaling) 
and hence, $f_1$ is a matrix with units along its diagonal.  

Note that we have not as yet assumed rigidity, 
so that for any automorphism of the minimal free resolution 
of a monomial ideal, the individual module automorphisms 
$f_0$ and $f_1$ are trivial.  

In what follows, we assume that $M$ is a rigid monomial ideal and proceed 
with the proof that the automorphisms of $\mathcal{F}$ are trivial by 
induction on homological degree.  

Let $i>1$ and suppose that $f_{i-1}$ is a matrix with units along 
the diagonal, so that we need to show the same is true for $f_i$.  
Since $f_i$ is a multigraded isomorphism, it is possible to more precisely 
describe how its matrix entries relate to the multidegrees of the 
source and target basis elements, whose relationship $f_i$ encodes.  
Indeed, write $\mbf{b}$ for the multidegree of a basis element $b$ in 
the free module $F_i$.  In order for the map $f_i$ to preserve 
the multidegree $\mbf{b}$, the monomial $\mbf{x^b}$ must be divisible by 
the monomials whose multidegrees index the rows of $f_i$ and which 
correspond to the nonzero entries of the column of $f_i$ that is of multidegree $\mbf{b}$.  
Precisely, for a basis element $b\in F_i$ of multidegree $\mbf{b}$, we 
write $b=m_{b,c_1}c_1+\cdots+m_{b,c_p}c_p$ for its expansion 
in an alternate basis and conclude that the monomial factor of the coefficients 
$m_{b,c_j}\in R$ must satisfy $\mbf{b}=\mdeg(m_{b,c_j})+\mbf{c}_j$ for all $j$. 

Since $f_i$ is an automorphism of a free module then it must be 
the product of elementary matrices which correspond to one of the 
following column operations: 
\begin{enumerate}
\item interchange two columns in $d'_i$
\item multiply a column in $d'_i$ by an element of $R$
\item add a ring element multiple of a column in $d'_i$ to another column.
\end{enumerate}       
We consider each case individually.

Since $M$ is rigid, the basis elements in the free module $F_i$ 
have unique multidegrees due to condition (R1).  Furthermore, 
these multidegrees correspond to monomials which are 
pairwise incomparable in $\LCM(M)$ due to condition (R2).  
We therefore know that the free module 
$F_i$ contains only one shifted copy of the ring for each multidegree.

Suppose that $F_i$ may be realized using either the choice of multigraded 
basis $B_i$ or the choice of multigraded basis $C_i$. 
Without loss of generality, we may assume that the bases $B_i$ and 
$C_i$ have a fixed ordering in which $b_t$ and $c_s$ 
have the same multidegree if and only if $t=s$.  This rules out 
the possibility of maps of type 1 occurring in the structure of $f_i$.  

For maps of type 2, the equality of the multidegrees of the target 
and source basis elements forces each of the nonzero entries of the matrix 
of $f_i$ to be a unit in $\Bbbk$ since the degree preserving nature 
of the map must be upheld.  

Finally, considering maps of type 3, we see that the fact that 
$\mbf{b}_k$ and $f_i(\mbf{b}_k)$ have the same multidegree for each $k$ 
implies that if $f_i$ adds a multiple of a column $l$ to the column 
$k$ then in order for multidegrees to be preserved, it must be that 
$\mbf{x}^{\mbf{b}_l}$ divides $\mbf{x}^{\mbf{b}_k}$.  This however, 
implies that either $\mbf{x}^{\mbf{b}_l}$ and $\mbf{x}^{\mbf{b}_k}$ 
are comparable in $\LCM(M)$ or that they are equal, 
contradicting condition (R2) or (R1) respectively.  

Together, these facts imply that the matrix of $f_i$ must be 
invertible, diagonal and degree preserving so that it has only 
units along its diagonal.  Thus, the only automorphisms of 
$\mathcal{F}$ are trivial and specifically, the differential maps in 
$\mathcal{F}$ are unique up to scaling.  

Conversely, suppose that a monomial ideal $M$ has a free resolution 
which admits only trivial automorphisms.  Since the individual maps 
in such an automorphism cannot be constructed from a product of elementary 
matrices which interchange two columns, the multidegrees of 
the free modules must be unique.  Since these maps also cannot be 
constructed from a product of elementary matrices which multiply a 
column in $d'_i$ by an element of $R$ or which add a ring element 
multiple of a column in $d'_i$ to another column, the multidegrees of 
the free modules are not comparable to one another in $\LCM(M)$.  Thus, 
conditions (R1) and (R2) are satisfied and $M$ is a rigid ideal.  
\end{proof}


\section{Minimal resolutions as poset resolutions}

The benefit of looking at rigid monomial ideals instead of the entire 
class of monomial ideals is that we have some hope of writing 
down a closed form description of the minimal free resolution.  
We do not give a complete description here, but provide descriptions 
of resolutions for a subclass of rigid monomial ideals and 
describe the remaining types to consider.

We begin by introducing a notion that describes the multidegrees 
contributing to the minimal resolution in the context of the poset 
relations of the lcm-lattice.  We call $M$ {\it concentrated} 
if every multidegree from $\LCM(M)$ which 
does not contribute to the minimal free resolution appears higher 
than all the contributing multidegrees in $\LCM(M)$.  
In other words, every multidegree which is smaller than a 
contributing multidegree must itself contribute.  
Formally, we have the following.  

\begin{definition}
A monomial ideal is said to be \emph{concentrated} if 
it has the property that for every $\mbf{x^a}\in \LCM(M)$ such that 
$\beta_{j,\mbf{a}}(R/M) = 0$ for all $j$ then $\mbf{x^a} > \mbf{x^b}\in \LCM(M)$ 
for every $\mbf{x^b}\in \LCM(M)$ for which $\beta_{i,\mbf{b}}(R/M) \ne 0$ 
for some $i$. 
A monomial ideal which is not concentrated is said 
to be \emph{dispersed}.  
\end{definition}

The class of concentrated monomial ideals is a generalization 
of the class of lattice-linear monomial ideals, whose minimal free resolution 
was constructed in \cite{Clark}.  

\begin{definition}\label{LLDef}
A monomial ideal $M$ is \emph{lattice-linear} if multigraded
bases $B_k$ of the free modules $F_k$ appearing in the minimal 
free resolution of $R/M$ can be fixed for all $k$ 
so that for any $i\ge 1$ and any $b\in{B_i}$ the differential 
$$d_i(b)=\sum_{b'\in{B_{i-1}}}m_{b,b'}\cdot{b'}$$ 
has the property that if the coefficient $m_{b,b'}\ne 0$ 
then $\mbf{x}^{\mbf{b'}}$ is covered by $\mbf{x}^{\mbf{b}}$ in the 
lcm-lattice $\LCM(M)$.
\end{definition}

The property of being concentrated is indeed a generalization 
of lattice-linearity, for if an ideal is lattice-linear, the coverings 
in the lcm-lattice are mirrored by the action 
of the differential on the corresponding basis elements.  
Were any noncontributing element dispersed between 
contributing elements within $\LCM(M)$, the assumption of 
lattice-linearity would immediately be contradicted.  Thus, 
every ideal which is lattice-linear is a concentrated ideal.  

\begin{remark}\label{ConcNotLL}
For an arbitrary monomial ideal, the property of being 
concentrated is not enough to guarantee lattice-linearity.  

Consider the example of the monomial ideal whose 
lcm-lattice is the augmented face poset of the simplicial 
complex on six vertices which consists of three triangles 
attached pairwise at three vertices.  Note here that to 
create the {\it augmented face poset} a maximal element 
is introduced to the existing face poset 
(which is a meet-semilattice) in order to create a finite 
atomic lattice.  One coordinatization of this 
lcm-lattice has as a minimal generating set consisting of the monomial 
vertex labels in Figure \ref{ConcentratedNotRigid}.  
The monomial ideal $M$ is concentrated since it has a ranked lcm-lattice, 
but is clearly not lattice-linear since the free module whose 
multidegree matches $a^2bcu^2vwx^2yz$, the top element of $\LCM(M)$, 
appears in homological degree three.  The free modules whose multidegrees 
are covered by $a^2bcu^2vwx^2yz$ in the lcm-lattice also appear 
in homological degree three.  
\end{remark}

\begin{figure}
\begin{center}
\begin{tikzpicture}
\draw[fill=lightgray] (0,0) -- (2,0) -- ++ (120:2) -- (0,0) -- cycle; 
\draw[fill=lightgray] (2,0) -- (4,0) -- ++ (120:2) -- (2,0) -- cycle;  
\draw[fill=lightgray] (1,1.732) -- (3,1.732) -- ++ (120:2) -- (1,1.732) -- cycle;  
\draw[fill] (0,0) circle (1pt) node[left]{$cu^2vwx^2yz$};
\draw[fill] (2,0) circle (1pt) node[below]{$bwx^2yz$};
\draw[fill] (4,0) circle (1pt) node[right]{$a^2bcvx^2yz$};
\draw[fill] (1,1.732) circle (1pt) node[left]{$au^2vwz$};
\draw[fill] (3,1.732) circle (1pt) node[right]{$a^2bcuy$};
\draw[fill] (2,3.464) circle (1pt) node[above]{$a^2bcu^2vwx$};
\end{tikzpicture}
\end{center}
\caption{A concentrated ideal which is not rigid}\label{ConcentratedNotRigid}
\end{figure}
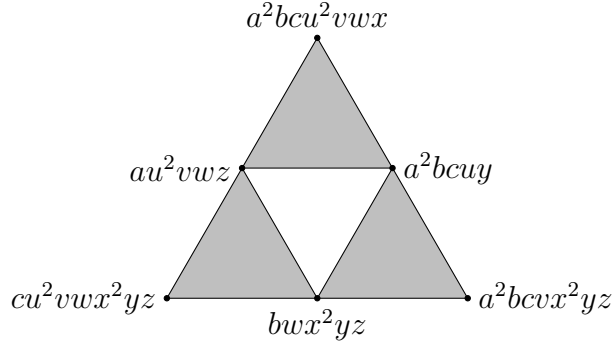

For the class of rigid monomial ideals however, the properties 
of being concentrated and being lattice-linear coincide.  

\begin{theorem}\label{concentratedLatticeLinear}
A rigid monomial ideal is concentrated if and only if it is lattice-linear.
\end{theorem}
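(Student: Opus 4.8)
The plan is to dispatch the two implications separately, treating the forward implication as essentially settled by the discussion preceding the statement and concentrating effort on the converse. The implication that a lattice-linear ideal is concentrated holds for \emph{every} monomial ideal and was already argued in the paragraph following Definition \ref{LLDef}: if the differential only records coverings, no noncontributing lattice element can be dispersed below a contributing one. So I would simply invoke that discussion. The substance is the reverse implication, that a \emph{rigid} concentrated ideal is lattice-linear. Because $M$ is rigid, Proposition \ref{uniqueRes} tells me the minimal resolution $\mathcal{F}$ admits only trivial automorphisms, so its differentials are determined up to rescaling of basis vectors; by (R1) each free module $F_i$ has exactly one rank-one summand for every multidegree $\mbf{b}$ with $\beta_{i,\mbf{b}}=1$. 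I fix one such multigraded basis $B_i$ in each homological degree and work with the resulting (essentially unique) coefficients $m_{b,b'}$.

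First I would record the weak statement that the differential moves strictly upward in the lattice. If $m_{b,b'}\neq 0$ then, since $\mathcal{F}$ is minimal, $m_{b,b'}$ lies in the maximal ideal and is a non-unit; degree-$0$ multigrading forces $\mbf{b}=\mdeg(m_{b,b'})+\mbf{b'}$, so $\mathbf{x^{b'}}$ is a proper divisor of $\mathbf{x^b}$ and hence $\mathbf{x^{b'}}<\mathbf{x^b}$ in $\LCM(M)$. Lattice-linearity is the sharper assertion that each such relation is in fact a covering $\mathbf{x^{b'}}\ld\mathbf{x^b}$, so the whole problem reduces to excluding lattice elements strictly between $\mathbf{x^{b'}}$ and $\mathbf{x^b}$.

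The useful form of concentration is that the set of Betti (contributing) multidegrees is a down-set in $\LCM(M)$: any $\mathbf{x^c}<\mathbf{x^q}$ with $\mathbf{x^q}$ contributing fails to be strictly above the contributing element $\mathbf{x^q}$, so by definition of concentration it cannot be noncontributing, i.e.\ it contributes. Now suppose $\mathbf{x^{b'}}<\mathbf{x^b}$ fails to be a covering and choose $\mathbf{x^c}$ strictly between them. By the down-set property $\mathbf{x^c}$ is a Betti degree, occurring in some homological degree $j$. Condition (R2) immediately rules out $j=i-1$ and $j=i$: in either case $\mathbf{x^c}$ would be comparable to $\mathbf{b'}$ (respectively $\mathbf{b}$), giving two contributing degrees that share a homological degree yet are comparable.

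It remains to rule out $j\le i-2$ and $j\ge i+1$, and this is the heart of the argument. The mechanism I would use is a monotonicity lemma: for a concentrated rigid ideal, comparable Betti degrees carry strictly increasing homological degrees, so $\mathbf{x^{b'}}<\mathbf{x^c}<\mathbf{x^b}$ would force $i-1<j<i$, which is impossible. To prove the monotonicity lemma I would appeal to the Gasharov--Peeva--Welker description \cite{GPW} of $\beta_{i,\mbf{b}}$ as the reduced homology $\Ho_{i-2}$ of the open interval $(1,\mathbf{x^b})$ in $\LCM(M)$: concentration guarantees that every element below a Betti degree is again a Betti degree, so these open intervals are themselves assembled entirely from contributing degrees and nest as one ascends a chain, while the homological degree tracks the dimension in which their reduced homology is supported, which increases strictly along the chain. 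Turning this topological monotonicity into a clean, index-correct statement --- and verifying that each Betti multidegree occurs in a single homological degree so that the symbol $j$ is unambiguous --- is the step I expect to be the main obstacle; everything else is forced by rigidity and minimality. With the monotonicity lemma in hand the contradiction is complete, so every nonzero differential coefficient records a covering and $M$ is lattice-linear.
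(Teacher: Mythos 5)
Your overall skeleton is, up to contraposition, the same as the paper's: the paper also dismisses the forward implication by the discussion after Definition \ref{LLDef}, also locates a nonzero coefficient $m_{b,b'}$ whose multidegrees fail to form a covering, also interpolates an intermediate lattice element, and also uses (R2) to eliminate the homological degrees $i$ and $i-1$ for that element (the paper runs this backwards, showing the intermediate degree cannot contribute and concluding non-concentration, while you use concentration to make it contribute and seek a contradiction --- logically the same argument). The genuine gap is exactly where you predicted it: the ``monotonicity lemma'' for the cases $j\le i-2$ and $j\ge i+1$ is not proved, and the justification you sketch for it is unsound. Nesting of the open intervals $(\hat 0,\mbf{x^{b'}})\subset(\hat 0,\mbf{x^c})\subset(\hat 0,\mbf{x^b})$ does \emph{not} force the reduced homology of their order complexes into strictly increasing dimensions; the paper's own non-rigid example $N=(bc,ac,a^2b)$ has comparable degrees $abc$ and $a^2bc$ whose open intervals both carry nonzero $\Ho_0$. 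Ruling out such behavior is precisely what rigidity must be used for, so a lemma asserting that comparable Betti degrees carry strictly increasing homological degrees cannot be extracted from concentration and the Gasharov--Peeva--Welker formula \cite{GPW} alone --- as stated, it simply repackages the hard half of the theorem. (Your side worry that a single multidegree might contribute in several homological degrees is harmless, by contrast: one rules out every $j$ with $\beta_{j,\mbf{c}}\neq 0$ separately, which is what the paper does.)

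What the paper does at exactly this point, and what you should substitute for the monotonicity lemma, is pass from intervals in $\LCM(M)$ to the multigraded strands of the minimal resolution, which is unique by Proposition \ref{uniqueRes}: the chain $\mbf{x^{b'}}\lneq\mbf{x^c}\lneq\mbf{x^b}$ yields subcomplex containments $\mathcal{F}_{\le \mbf{b'}}\subset\mathcal{F}_{\le \mbf{c}}\subset\mathcal{F}_{\le \mbf{b}}$, and one then compares where these strands terminate. If $j>i$, the middle strand would persist past homological degree $i$, contradicting its containment in the degree-$\mbf{b}$ strand, which terminates in homological degree $i$; if $j<i-1$, the middle strand would terminate below homological degree $i-1$, contradicting the fact that it contains the degree-$\mbf{b'}$ strand, which persists through homological degree $i-1$ because $\beta_{i-1,\mbf{b'}}\neq 0$. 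This strand-termination comparison is the concrete mechanism behind your phrase ``everything else is forced by rigidity and minimality''; it only requires comparing the three specific degrees $\mbf{b'}$, $\mbf{c}$, $\mbf{b}$, and so is weaker (and easier) than the global monotonicity statement you proposed. With that replacement your argument closes and coincides with the paper's proof.
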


\begin{proof} 
We need only show that a rigid ideal which is concentrated must be lattice-linear 
and proceed by proving the contrapositive of this implication.  

Suppose that $M$ is a rigid monomial ideal which is not lattice-linear.  
By the definition of rigidity, the automorphisms of the minimal free 
resolution of $M$ are trivial and $M$ has a unique minimal free resolution 
$(\mathcal{F},d)$.  For each $\ell$, write $B_\ell$ for the unique choice of basis of the 
free module $F_\ell$ appearing in the resolution.  Our supposition 
that $M$ is not lattice-linear implies that for some $i$, there exists a basis 
element $c$ of multidegree $\mbf{c}$ in $B_i$, such that there exists some 
multidegree $\mbf{a}$ with the property that the coefficient $m_{a,c}$ 
in the expansion of $d_i(c)$ is nonzero but that $\mbf{x^a}$ is not 
covered by $\mbf{x^c}$ in $\LCM(M)$.  In other words, there exists 
$\mbf{x^b}\in \LCM(M)$ such that $\mbf{x^a}\lneq \mbf{x^b} \lneq \mbf{x^c}$.  

Since $M$ is assumed to be rigid, we have $\beta_{j,\mbf{b}}(R/M) \ne 0$ for some $j$.  
Since $\mbf{x^a}\lneq \mbf{x^b} \lneq \mbf{x^c}$ there exist multigraded strands 
within the free resolution which have the following containment structure;
 
$$\mathcal{F}_{\le \mbf{a}}\subset\mathcal{F}_{\le \mbf{b}}\subset\mathcal{F}_{\le \mbf{c}}.$$

This structure is contradicted if $j>i$ since 
the degree $\mbf{c}$ strand terminates in homological degree $i$ 
and the degree $\mbf{b}$ strand terminates in homological degree $j$.  
If $j=i$ (or respectively $j=i-1$), then we have identified two multidegrees which 
contribute in homological degree $i$ (or respectively $i-1$) that are comparable to 
one another in $\LCM(M)$, a contradiction to the second property in 
the definition of rigidity.  Lastly, if $j<i-1$ the strand inclusion 
above is again contradicted since the degree $\mbf{b}$ strand must terminate 
at a higher homological degree than the degree $\mbf{a}$ strand.  

Such an integer $j$ therefore does not exist and $\beta_{\ell,\mbf{b}}(R/M) = 0$ 
for every $\ell$.  Hence, if a rigid ideal is not lattice-linear then it is not concentrated.  
\end{proof}

\section{Stability of resolutions for subsets of  $\mathcal{L}(n)$}\label{filters}

Let $\mathcal{L}(n)$ be the set of all finite atomic lattices with $n$ ordered atoms.  
Phan in \cite{phan} defines a partial order on $\mathcal{L}(n)$ as $P\geq Q$ if there 
exists a join preserving map $f:P \rightarrow Q$ which is a bijection on atoms.  
Theorem 4.2 in \cite{phan} shows that under this partial order, $\mathcal{L}(n)$ is 
itself a finite atomic lattice.  Moreover, Theorem 3.3 in \cite{GPW} indicates that 
total Betti numbers weakly increase as one moves up chains in $\mathcal{L}(n)$.  
Additionally, Theorem 5.1 in \cite{phan} shows that every finite atomic lattice is the 
lcm-lattice of some monommial ideal.  Thus, $\mathcal{L}(n)$ can be thought of as 
the lattice of all monomial ideals with $n$ ordered generators up to equivalence of lcm-lattices.  

Using the formulas for multigraded Betti numbers which utilize order complexes of 
intervals in the lcm-lattice (see Theorem 2.1 in \cite{GPW}), we can interchangeably 
refer to the Betti numbers of a finite atomic lattice and the Betti numbers of a 
monomial ideal.  Specifically, for a monomial $\mbf{x^b} \in \LCM(M)$ the formula 
for computing multigraded Betti numbers in homological degree $i$ is:
$$\beta_{i,\mbf{b}}(R/M) = \dim \tilde{H}_{i-2}(\Delta (\hat{0},\mbf{x^b}); \Bbbk),$$ 
where $\Delta (\hat{0},\mbf{x^b})$ is the order complex of the open interval from $\hat{0}$ to $\mbf{x^b}$.  
Since rigid monomial ideals are defined by the behavior of their multigraded Betti 
numbers, we call a finite atomic lattice rigid (or not) using the same definition.  
In this section we will often refer to the Betti numbers of a finite atomic lattice $P$ 
rather than the Betti numbers of a specific monomial ideal.  As such, we use 
as $\beta_{i,p}$ where $p \in P$ to denote these Betti numbers.  

Let $\beta_i = \sum \beta_{i,\mbf{b}}$ be the total Betti numbers of the ideal $M$ and write 
$\beta = (\beta_0, \beta_1, \dots, \beta_t)$ for the vector of total Betti numbers.  
It is reasonable then to fix subposets of $\mathcal{L}(n)$ which consist of all the 
finite atomic lattices with the same total Betti numbers.  We refer to these 
subposets as {\it Betti stratum} and denote them $\mathcal{L}(n)_{\beta}$.  Given a rigid 
monomial ideal $M$ whose total Betti numbers are $\beta$ we now examine the relationship 
between $M$ and ideals whose lcm-lattices are in $\mathcal{L}(n)_{\beta}$.

\begin{proposition}\label{rigidCover}
Let $Q,P \in \mathcal{L}(n)_{\beta}$ for some $\beta$ such that $Q$ covers $P$ 
(i.e. $Q>P$ and there is no lattice $T$ such that $Q>T>P$).  If $P$ is rigid then $Q$ is rigid.   
\end{proposition}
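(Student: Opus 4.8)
The plan is to exploit the explicit description of the covering relations in $\mathcal{L}(n)$ and to track exactly how the multigraded Betti numbers change across a single cover. By the analysis of covers in $\mathcal{L}(n)$ (see \cite{phan}), since $Q$ covers $P$ the lattice $P$ is obtained from $Q$ by deleting a single non-atomic meet-irreducible element $q^{*}$. Write $c$ for the unique element of $Q$ covering $q^{*}$ (a meet-irreducible has a unique cover, so every element of $Q$ strictly above $q^{*}$ is $\geq c$, and every element strictly below $q^{*}$ is $\leq c$). I identify the ground set of $P$ with $Q\smsm\{q^{*}\}$ carrying the induced order, and I compare $\beta_{i,p}(Q)$ with $\beta_{i,p}(P)$ element by element through the formula $\beta_{i,p}=\dim\tilde{H}_{i-2}(\Delta(\hat{0},p);\Bbbk)$.

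First I would localize where the Betti numbers can move. For $p\in P$ with $p\not>q^{*}$, the vertex $q^{*}$ does not lie in $(\hat{0},p)$, so $\Delta(\hat{0},p)$ is literally the same complex in $P$ and in $Q$ and $\beta_{i,p}(Q)=\beta_{i,p}(P)$ for all $i$. For $p>c$ I would show the deletion of $q^{*}$ is a homotopy equivalence: the elements of $(\hat{0},p)$ lying above $q^{*}$ form the set $[c,p)$, which has minimum $c$, so $\Delta(q^{*},p)$ is a cone and the link of $q^{*}$ is contractible; gluing the contractible star of $q^{*}$ along a contractible link leaves the homotopy type unchanged, giving $\beta_{i,p}(Q)=\beta_{i,p}(P)$ again. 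Hence the only multidegrees at which the Betti numbers can change are $q^{*}$ and its cover $c$. At $p=c$ the vertex $q^{*}$ is maximal in $(\hat{0},c)_{Q}$ with link $\Delta(\hat{0},q^{*})$, and writing $\Delta(\hat{0},c)_{Q}=\Delta(\hat{0},c)_{P}\cup\operatorname{star}(q^{*})$ I feed this decomposition into a Mayer--Vietoris sequence; since $\operatorname{star}(q^{*})$ is contractible this yields a long exact sequence relating the reduced homologies of $\Delta(\hat{0},q^{*})$, $\Delta(\hat{0},c)_{P}$ and $\Delta(\hat{0},c)_{Q}$, that is, relating $\beta_{\bullet,q^{*}}(Q)$, $\beta_{\bullet,c}(P)$ and $\beta_{\bullet,c}(Q)$.

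Next I would use that $Q$ and $P$ lie in the same Betti stratum, so $\beta_i(Q)=\beta_i(P)$ for every $i$. Combined with the previous step all totals cancel except the contributions at $q^{*}$ and $c$, which forces
\[
\beta_{i,q^{*}}(Q)+\beta_{i,c}(Q)=\beta_{i,c}(P)\qquad\text{for all }i .
\]
(Equivalently, this says the connecting maps in the Mayer--Vietoris sequence vanish and it splits into short exact sequences; this is also consistent with the monotonicity of \cite{GPW}.) Since $P$ is rigid, the right-hand side is $0$ or $1$ by (R1), so the nonnegative integers $\beta_{i,q^{*}}(Q)$ and $\beta_{i,c}(Q)$ are each $0$ or $1$ and are never both $1$; together with $\beta_{i,p}(Q)=\beta_{i,p}(P)$ at every other $p$, this gives (R1) for $Q$.

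For (R2) I would show that distinct multidegrees $s,t$ with $\beta_{i,s}(Q)=\beta_{i,t}(Q)=1$ in a common homological degree $i$ are incomparable in $Q$, by cases. If neither equals $q^{*}$, the displayed relation shows each has nonzero Betti number in degree $i$ over $P$ too, so they are incomparable in $P$ by (R2) for $P$; as $P$ carries the order induced from $Q$ on $Q\smsm\{q^{*}\}$, they stay incomparable in $Q$. If $s=q^{*}$, then $\beta_{i,q^{*}}(Q)=1$ forces $\beta_{i,c}(P)=1$ and $\beta_{i,c}(Q)=0$, so $t\neq c$ and $\beta_{i,t}(P)=1$; by (R2) for $P$ the elements $c$ and $t$ are incomparable in $P$, and since everything above $q^{*}$ is $\geq c$ while everything below $q^{*}$ is $\leq c$, any comparability of $q^{*}$ with $t$ would force a comparability of $c$ with $t$, a contradiction. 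The case $\{s,t\}=\{q^{*},c\}$ cannot occur, as $\beta_{i,q^{*}}(Q)$ and $\beta_{i,c}(Q)$ are never simultaneously nonzero. Thus $Q$ satisfies (R2) and is rigid. I expect the main obstacle to be the second step, namely verifying cleanly that the Betti numbers are genuinely unchanged away from $q^{*}$ and $c$ (the cone/homotopy-equivalence argument for $p>c$) and setting up the Mayer--Vietoris comparison at $p=c$ correctly, since everything downstream relies on having isolated the change to exactly these two multidegrees.
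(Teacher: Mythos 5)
Your proposal is correct and follows essentially the same route as the paper's proof: identify the cover as the addition of a single meet-irreducible element $q^{*}$ with unique cover $c$, show the multigraded Betti numbers are unchanged at every element other than $q^{*}$ and $c$, use equality of total Betti numbers within the stratum to force $\beta_{i,q^{*}}(Q)+\beta_{i,c}(Q)=\beta_{i,c}(P)$, and then run the same case analysis for (R1) and (R2). The only deviations are cosmetic: for $p>c$ you replace the paper's appeal to Quillen's Fiber Theorem with a direct contractible-link gluing argument (valid, since $(q^{*},p)=[c,p)$ is a cone), and your Mayer--Vietoris setup at $p=c$ is superfluous, as the stratum counting alone already yields the displayed relation, which is exactly how the paper proceeds.
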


\begin{proof}

From proposition 5.1 in \cite{mapes} we know that if $Q$ 
covers $P$ then as a set $Q = P \cup \{q\}$ for some $q$.  
Moreover, since $P$ is a finite atomic lattice, we know that $q \in Q$ 
must be meet-irreducible (it is not the meet of any two elements in $P$).  
So there is a unique element $p' \in Q$ which covers $q$.  To check that 
$Q$ is rigid we need to check conditions (R1) and (R2).  Note that for all 
$p<q$ in $Q$, the interval $(\hat{0}, p)$ in $Q$ is identical to the interval 
$(\hat{0}, p)$ in $P$ so the associated Betti number is unchanged.  
Similarly, the Betti number is unchanged for all elements $p \in Q$ 
which are not comparable to $q$.  

Finally for any $p \gneq p'$ in $Q$ (or $P$ for that matter) we will 
show that the Betti number is also unchanged.  To see this, consider 
the join preserving bijection on atoms from $Q \to P$.  The fiber over a 
point $p\in P$ is $\{p\}$ for $p \neq p'$ or $\{p,q\}$ in the case when $p = p'$.  
Restrict this map to those intervals $(\hat{0}, p)$ in $Q$ where $p \gneq p'$.  
The fibers of this map are contractible and by Quillen's Fiber Theorem 
(see Theorem 10.5 in \cite{bjorner}) the order complexes of these intervals 
are homotopy equivalent.  

It is therefore only necessary to check the conditions of rigidity 
for multidegrees corresponding to $p'$ and $q$.  Since the total 
Betti numbers of $P$ and $Q$ are the same and for all other multidegrees 
$p \in Q$ we have seen that $\beta_{i,p}$ is the same as in $P$, 
if $\beta_{i, p'} = 0$ in $P$ for all $i$ then the same is true 
in $Q$ and $\beta_{i,q}=0$ in $Q$.  Otherwise either $\beta_{i,q}$ or 
$\beta_{i,p'}$ is 1 in $Q$ for some $i$.  Thus condition (R1) is satisfied.

To see that condition (R2) holds, consider the following.  
If $\beta_{i,p} = 1$ in $Q$ then since $P$ is rigid, condition (R2) is 
satisfied for $Q$ as well.  Alternatively, if $\beta_{i,q} = 1$ in $Q$ 
and condition (R2) is not satisfied then there is some $p \in Q$ such 
that $\beta_{i,p} = 1$ and $q$ and $p$ are comparable in $Q$.  By the 
above argument however, we know that $\beta_{i,p} = 1$ in $P$ and 
so if $p$ and $q$ are comparable then $p$ must also be comparable to $p'$ 
which contradicts the fact that $P$ is rigid.  As such, (R2) must be 
satisfied for $Q$.
\end{proof}

An easy corollary of this is the following.

\begin{corollary}\label{rigidFilter}
Let $P$ and $Q$ be in the same Betti stratum $\mathcal{L}(n)_{\beta}$ with $Q>P$.  If $P$ is rigid then $Q$ is rigid.
\end{corollary}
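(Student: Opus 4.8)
The plan is to derive Corollary~\ref{rigidFilter} directly from Proposition~\ref{rigidCover} by an inductive argument along a saturated chain. Since both $P$ and $Q$ lie in the finite atomic lattice $\mathcal{L}(n)$ and $Q > P$, there exists a finite saturated chain $P = P_0 \lessdot P_1 \lessdot \cdots \lessdot P_k = Q$ in $\mathcal{L}(n)$, where each $P_{j+1}$ covers $P_j$. First I would observe that every lattice appearing in this chain lies in the same Betti stratum $\mathcal{L}(n)_{\beta}$: because total Betti numbers weakly increase up chains in $\mathcal{L}(n)$ (Theorem~3.3 in \cite{GPW}), and $P_0$ and $P_k$ share the same total Betti vector $\beta$, the intermediate lattices $P_j$ are squeezed between them and must all have total Betti numbers equal to $\beta$ as well. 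This confirms that each covering relation $P_j \lessdot P_{j+1}$ takes place entirely within the fixed stratum, which is exactly the hypothesis required by Proposition~\ref{rigidCover}.

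With this setup in place, the argument is a straightforward induction on $j$. The base case is the hypothesis that $P = P_0$ is rigid. For the inductive step, I would assume $P_j$ is rigid; since $P_{j+1}$ covers $P_j$ and both lie in $\mathcal{L}(n)_{\beta}$, Proposition~\ref{rigidCover} applies verbatim to conclude that $P_{j+1}$ is rigid. Iterating along the chain yields that $Q = P_k$ is rigid, which is the desired conclusion.

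The only subtlety worth flagging is the monotonicity step that pins the intermediate lattices into the same stratum. Proposition~\ref{rigidCover} is stated only for a single cover within a fixed $\mathcal{L}(n)_{\beta}$, so it is essential that each $P_j$ genuinely has total Betti vector $\beta$ rather than something strictly larger; this is precisely what the squeezing argument from \cite{GPW} guarantees. I expect this to be the main (and essentially only) obstacle, since the rest is a routine induction. Given that $\mathcal{L}(n)$ is a finite lattice, the existence of a finite saturated chain from $P$ to $Q$ is automatic and requires no further justification.
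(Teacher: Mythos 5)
Your proof is correct and is exactly the argument the paper intends: the paper states this result without proof as ``an easy corollary'' of Proposition~\ref{rigidCover}, and the intended content is precisely your induction along a saturated chain of covers. Your extra care in verifying that the intermediate lattices remain in the stratum $\mathcal{L}(n)_{\beta}$ --- via the componentwise weak monotonicity of total Betti numbers from Theorem~3.3 of \cite{GPW} --- is the right (and only) point needing justification, and you handle it correctly.
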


The following extends the result of Theorem \ref{concentratedLatticeLinear} 
and allows the construction of the minimal free resolution of all dispersed rigid 
monomial ideals within a Betti stratum which are greater than a concentrated rigid 
monomial ideal appearing in the same stratum. 

\begin{theorem}\label{resInRigidFilter}
Let $P, Q \in \mathcal{L}(n)_{\beta}$ for some $\beta$.  If $P$ is rigid and $Q > P$ then the minimal resolution of $P$ is isomorphic to the minimal resolution of $Q$.
\end{theorem}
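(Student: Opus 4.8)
The plan is to reduce to the case where $Q$ covers $P$ and then transport the unique minimal resolution of $P$ across the covering, reusing the Betti-degree bookkeeping already carried out in the proof of Proposition \ref{rigidCover}. For the reduction, note that since $P,Q \in \mathcal{L}(n)_{\beta}$ with $Q > P$, Theorem 3.3 of \cite{GPW} forces every lattice $T$ with $P \le T \le Q$ to satisfy $\beta \le \beta(T) \le \beta$, so the entire interval $[P,Q]$ in $\mathcal{L}(n)$ lies inside the stratum $\mathcal{L}(n)_{\beta}$. Choosing a saturated chain $P = P_0 \lessdot P_1 \lessdot \cdots \lessdot P_m = Q$ in this interval, Corollary \ref{rigidFilter} (equivalently, iterating Proposition \ref{rigidCover}) makes every $P_k$ rigid. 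Since the desired isomorphism will be obtained by composing isomorphisms along the chain, it suffices to treat a single cover $Q \gtrdot P$.

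In the covering case I would invoke the structure recalled in Proposition \ref{rigidCover}: $Q = P \cup \{q\}$ with $q$ meet-irreducible and $p'$ its unique cover, and the collapse $g:Q\to P$ (the identity off $q$, with $g(q)=p'$) is the join-preserving, atom-bijective map witnessing $Q>P$. Both ideals are rigid, so by Proposition \ref{uniqueRes} each possesses a minimal resolution unique up to rescaling basis elements; write $(\mathcal{F}^P,d^P)$ and $(\mathcal{F}^Q,d^Q)$. Reusing the interval analysis of Proposition \ref{rigidCover} (the intervals $(\hat{0},p)$ agree in $P$ and $Q$ whenever $p\not>q$, and Quillen's Fiber Theorem handles $p\gneq p'$), the only multidegrees where the two Betti tables can differ are $p'$ and $q$, and equality of total Betti numbers yields $\beta_{i,p'}+\beta_{i,q}=\beta_{i,p'}$ comparing $Q$ to $P$ in each homological degree $i$. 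As each term is $0$ or $1$ by (R1), this produces a homological-degree-preserving bijection $\Phi$ between the basis elements of $\mathcal{F}^P$ and those of $\mathcal{F}^Q$: it is the identity away from $\{p',q\}$, and in each homological degree where $p'$ contributes to $P$ it sends the unique $Q$-contributor among $\{p',q\}$ to $p'$. Concretely $\Phi$ is the map induced by $g$, and the goal becomes to upgrade $\Phi$ to an isomorphism of complexes, i.e.\ to realize $\mathcal{F}^Q$ as $\mathcal{F}^P$ with its multidegrees relabeled along $g$.

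To finish I would transport $d^P$ through $\Phi$: relabel each basis element by its $Q$-multidegree and replace each nonzero entry $\mathbf{x}^{\mathbf{b}-\mathbf{b'}}$ by $\mathbf{x}^{\Phi(\mathbf{b})-\Phi(\mathbf{b'})}$, then argue that the resulting complex is exact with $H_0=R/M_Q$; by the uniqueness in Proposition \ref{uniqueRes} it must then agree, up to trivial rescaling, with $\mathcal{F}^Q$. I expect the \textbf{main obstacle} to be exactly the well-definedness of this transport at the \emph{migrated} generator. When the Betti number moves from $p'$ down to $q$, every nonzero entry of $d^P$ leaving $p'$ must land on a multidegree $\mathbf{b'}$ that is still below $q$ (not merely below $p'$) in $Q$, since otherwise the relabeled monomial $\mathbf{x}^{q-\mathbf{b'}}$ would carry negative exponents; the subtlety is that $\{x<p'\}$ in $P$ properly contains $\{x<q\}$ in $Q$, the extra elements being exactly those that become incomparable to $q$. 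The resolution of this obstacle should come from comparing intervals: writing $X=\Delta(\hat{0},p')$ computed in $P$ and $A=\Delta(\hat{0},q)$ computed in $Q$, the order complex $\Delta(\hat{0},p')$ computed in $Q$ is $X\cup CA$, where $q$ cones off precisely the subcomplex $A$. The Mayer--Vietoris sequence together with $\tilde{H}_{i-2}(X)=\Bbbk$, $\tilde{H}_{i-2}(A)=\Bbbk$ and $\tilde{H}_{i-2}(X\cup CA)=0$ forces $\tilde{H}_{i-2}(A)\to\tilde{H}_{i-2}(X)$ to be an isomorphism, so the class responsible for $\beta_{i,p'}$ is carried by chains inside $A$. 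Since for the rigid ideal $P$ the column of $d^P$ at $p'$ is determined up to scalar by a representative of this class, its support is confined to $(\hat{0},q)$, which is exactly what the relabeling requires; the dual check, that entries \emph{into} the migrated generator originate at multidegrees $>_Q q$, is automatic because every element above $p'$ in $P$ remains above $p'$, hence above $q$, in $Q$. Making this support statement fully precise is the technical heart; once it is in hand, the relabeled complex is a minimal free complex over $R$ with the correct Betti degrees, rigidity identifies it with $\mathcal{F}^Q$, and composing the relabelings along the saturated chain proves the theorem.
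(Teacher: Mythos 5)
There is a genuine gap, and it sits exactly where you flagged the ``technical heart.'' Your plan runs in the opposite direction from the one the available tools support. Theorem 3.3 of \cite{GPW} transfers resolutions \emph{downward} in $\mathcal{L}(n)$: the join-preserving atom-bijection $f:Q\rightarrow P$ sends a resolution of $Q$ to a resolution of $P$. The paper's proof consists of only this: let $\mathcal{F}$ be the minimal resolution of $Q$; then $f(\mathcal{F})$ resolves $P$, and since $P$ and $Q$ lie in the same Betti stratum, $f(\mathcal{F})$ already has the minimal possible total Betti numbers and is therefore minimal; by Proposition \ref{uniqueRes}, the rigid ideal $P$ has a unique minimal resolution up to trivial rescaling, so $f(\mathcal{F})$ is it, and since $Q$ is rigid by Corollary \ref{rigidFilter} the asserted isomorphism follows. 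No covering induction and no interval surgery are needed; the ability to reconstruct the resolution of $Q$ from that of $P$ (the direction you try to prove directly) is then a consequence, not the method. You instead attempt an \emph{upward} transport of $\mathcal{F}^P$ to $Q$, which is precisely the direction for which no transfer theorem exists.

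Concretely, two steps of your sketch are unproven and do not follow from anything in the paper. First, the claim that ``the column of $d^P$ at $p'$ is determined up to scalar by a representative of this class'': the differential column of a minimal free resolution is not canonically computed by a cycle in $\Delta(\hat{0},p')$; such a dictionary is available only in special constructions (e.g., the poset resolutions of lattice-linear ideals in \cite{Clark}), and establishing it for an arbitrary rigid $P$ is essentially as hard as the theorem itself. Your Mayer--Vietoris decomposition $\Delta\left((\hat{0},p')_Q\right)=X\cup CA$ with $X\cap CA=A$ is correct and does produce a homology representative supported in $A$, but that alone says nothing about the support of the actual column of $d^P$, which is what the relabeling requires to avoid negative exponents. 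Second, even granting the support statement, you may not invoke Proposition \ref{uniqueRes} until you have shown the relabeled complex is \emph{exact}: uniqueness applies to minimal resolutions, not to minimal multigraded complexes with the correct Betti degrees, and exactness of a regraded complex is exactly what fails in general when moving up $\mathcal{L}(n)$ --- your proposal defers this check rather than performing it. (By contrast, your reduction to a saturated chain of covers, using weak monotonicity of total Betti numbers to force the whole interval $[P,Q]$ into $\mathcal{L}(n)_{\beta}$, is correct --- it is simply unnecessary once the downward transfer is used.)
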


\begin{proof}
By Corollary \ref{rigidFilter} we know that $Q$ is rigid.  Using the fact 
that $Q>P$ means that there exists a join preserving map $f:Q \rightarrow P$ 
which is a bijection on atoms.  We can therefore apply Theorem 3.3 in \cite{GPW} 
which says: if $\mathcal{F}$ is a minimal resolution of $Q$ then $f(\mathcal{F})$ 
is a resolution of $P$.  Since the total Betti numbers are the same for both $P$ 
and $Q$ it follows that $f(\mathcal{F})$ is a minimal resolution of $P$. 

Using rigidity we see that $P$ has a unique minimal resolution up to scaling, 
thus $\mathcal{F}$ must be isomorphic to the minimal resolution of $P$.  
Moreover since $Q$ is itself rigid, the minimal resolution of $Q$ is isomorphic to that of $P$. 
\end{proof}

\begin{remark}
In the situation when one knows the minimal free resolution of a monomial ideal whose lcm-lattice is 
$Q$, the ability to transfer this resolution information to the minimal resolution of 
$P$ appearing lower in the stratum $\mathcal{L}(n)_{\beta}$ is an artifact of Theorem 3.3 in \cite{GPW}.  
The new information of Theorem \ref{resInRigidFilter} is that if the minimal resolution of the rigid 
ideal $P$ is known then it can be used to construct the minimal resolution of $Q$, the ideal appearing higher 
in the stratum $\mathcal{L}(n)_{\beta}$.  This is not true if $P$ is not rigid.  
\end{remark}

In the following example we see how to combine Theorem \ref{concentratedLatticeLinear} and Theorem \ref{resInRigidFilter} to construct minimal resolutions for a larger set of rigid monomial ideals than just the concentrated ones.

\begin{example}
Figure \ref{concentratedRigidFilter} shows an interval of the Betti stratum $\mathcal{L}(4)_{(1,4,4,1)}$.  
The finite atomic lattice at the bottom represents the lcm-lattice of a concentrated 
rigid monomial ideal, and therefore its minimal resolution can be constructed by using 
the poset resolution construction on the lcm-lattice.  

\begin{figure}
\center
\includegraphics[scale=0.25]{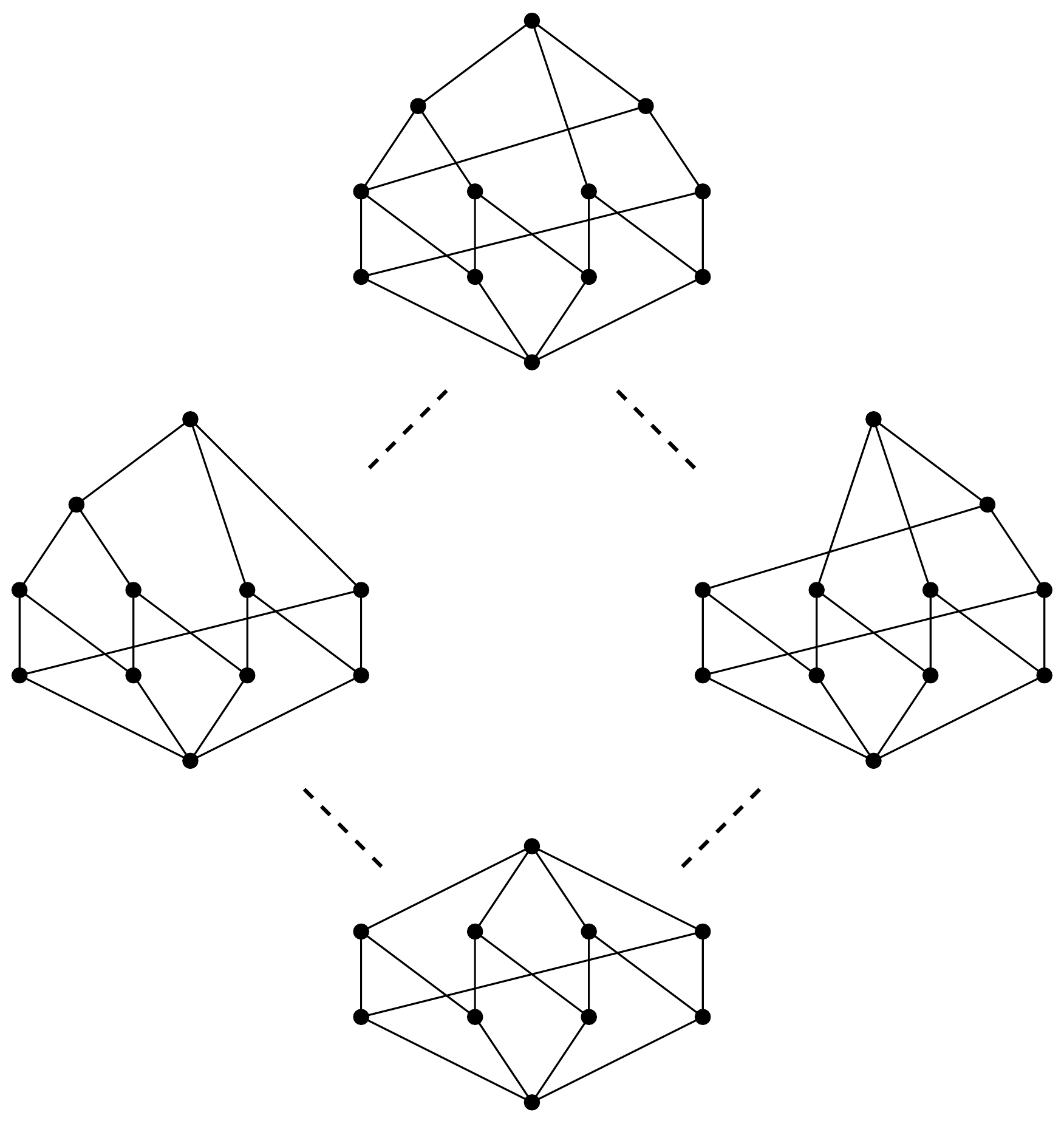}
\caption{Rigid ideals whose minimal resolution can be constructed as a poset resolution}\label{concentratedRigidFilter}
\end{figure}

The finite atomic lattices above this lattice represent lcm-lattices of dispersed rigid monomial 
ideals, since the maximal element in all of the lattices corresponds to a second syzygy.  
Due to Theorem \ref{resInRigidFilter} each of these ideals admit a minimal poset resolution 
whose maps mirror those of the concentrated rigid ideal.  
\end{example}

The following proposition and corollary show how we can use 
rigidity to understand some cellular resolutions.

\begin{proposition}\label{faceRigid}
Let $P_X$ be the augmented face poset of a regular CW complex $X$.  
If $P_X$ is a finite atomic lattice and $X$ is acyclic, then $P_X$ is rigid.
\end{proposition}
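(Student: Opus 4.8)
The plan is to prove rigidity by analyzing the multigraded Betti numbers directly through the homology formula $\beta_{i,\mbf{b}}(R/M) = \dim \tilde{H}_{i-2}(\Delta(\hat{0},\mbf{x^b}); \Bbbk)$, using the hypothesis that $P_X$ is the augmented face poset of an acyclic regular CW complex. The key observation I would exploit is that when $P_X$ is an augmented face poset, the open intervals $(\hat{0}, p)$ in $P_X$ have order complexes whose homotopy type is governed by the boundary of the cell $p$ in $X$. Specifically, for a cell $p$ corresponding to a $k$-dimensional face, the interval below $p$ should encode the face poset of the boundary sphere $\partial p$, so that $\Delta(\hat{0}, p)$ is homotopy equivalent to a sphere $S^{k-1}$ (via the standard fact that the order complex of the proper part of the face lattice of a cell is the barycentric subdivision of its boundary). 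This would immediately give condition (R1): each such interval has reduced homology concentrated in a single degree and equal to $1$, so every nonzero Betti number is exactly $1$.

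First I would set up the correspondence between elements of $P_X$ and cells of $X$, being careful about the two special elements: the minimal element $\hat{0}$ (the empty face) and the adjoined maximal element $\hat{1}$ that was introduced to make the face poset into a finite atomic lattice. For every element $p \neq \hat{1}$ corresponding to an actual cell, I would verify that $\Delta(\hat{0}, p)$ is a sphere of the appropriate dimension, pinning down exactly one homological degree $i$ in which $\beta_{i,p} = 1$ and establishing that all other Betti numbers at $p$ vanish. The top element $\hat{1}$ requires separate treatment, and this is precisely where acyclicity enters: the open interval $(\hat{0}, \hat{1})$ is the full proper part of $P_X$, whose order complex is the barycentric subdivision of $X$ itself (as a CW complex), hence homotopy equivalent to $X$. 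Since $X$ is acyclic, $\tilde{H}_*(\Delta(\hat{0},\hat{1}); \Bbbk) = 0$ in all degrees, so $\hat{1}$ contributes no Betti number at all. This handles (R1) completely, since each cell-element contributes exactly $1$ in one degree and the adjoined top contributes nothing.

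For condition (R2) I would argue that two elements contributing in the same homological degree must be incomparable. If $p$ corresponds to a $k$-cell, its unique contributing degree is $i = k+1$ (so that $\tilde{H}_{i-2} = \tilde{H}_{k-1}(S^{k-1})$), meaning the homological degree of a contributing element is determined by the dimension of its cell. Thus two elements contributing in the same homological degree $i$ correspond to cells of the same dimension $k = i-1$. Two distinct cells of equal dimension in a CW complex cannot have one contained in the boundary of the other, so the corresponding elements are incomparable in $P_X$ — which is exactly (R2). The main obstacle I anticipate is verifying rigorously that the open interval $(\hat{0}, p)$ really has the homotopy type of the boundary sphere of the cell $p$ for a general regular CW complex; this relies on the fact that for regular CW complexes the face poset faithfully encodes the combinatorial structure of cell boundaries (each closed cell is homeomorphic to a ball with boundary a sphere, and the subdivision-of-boundary identification holds), so I would cite or invoke the standard order-complex-of-face-poset results rather than reprove them. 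The acyclicity hypothesis is used in exactly one place — killing the contribution of the adjoined top element — so I would make sure that step is stated cleanly, noting that without acyclicity $\hat{1}$ could contribute and potentially violate (R1) or the degree-counting argument for (R2).
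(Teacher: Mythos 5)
Your proof is correct, but it takes a genuinely different route from the paper's. The paper disposes of the proposition in two sentences by citing Proposition 6.5 of \cite{mapes}: for $X$ acyclic, a minimal resolution of (an ideal with lcm-lattice) $P_X$ is supported on $X$, so the basis elements of the minimal resolution biject with the cells of $X$, each cell contributing Betti number $1$ in its own multidegree (giving (R1)), and since no two cells of the same dimension are contained in one another, (R2) follows. You instead compute the multigraded Betti numbers directly from the Gasharov--Peeva--Welker interval formula: for a $k$-cell $p$ the order complex of $(\hat{0},p)$ is the barycentric subdivision of $\partial \bar{p} \cong S^{k-1}$ (this is where regularity of $X$ is used, via the standard face-poset results as in \cite{bjorner}), so $\beta_{i,p}=1$ exactly when $i=k+1$ and vanishes otherwise; the adjoined top element contributes nothing because $\Delta(\hat{0},\hat{1})$ is the barycentric subdivision of the acyclic complex $X$ itself. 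Your argument is self-contained modulo standard topological combinatorics, avoids the cellular-resolution machinery entirely, isolates the single point where acyclicity enters, and pins down the exact homological degree of each contribution, which makes (R2) transparent: equal homological degree forces equal cell dimension, and distinct cells of equal dimension are incomparable in a face poset. What the paper's citation-based proof buys in exchange is brevity and, importantly, the stronger conclusion that the minimal resolution is actually \emph{supported on} $X$, which the paper reuses immediately in the corollary that follows; your computation yields the Betti numbers and rigidity but would still need Mapes's result to recover that support statement.
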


\begin{proof}
By Proposition 6.5 in \cite{mapes} we know that a minimal resolution of $P_X$ 
is supported on $X$.  Clearly each cell $\alpha$ contributes a different 
Betti degree and no two cells of the same dimension are contained in each other.  
Thus the non-comparability condition is satisfied.
\end{proof}

In fact, using rigidity and the previous proposition we can improve 
Proposition 6.5 in \cite{mapes} with this easy corollary to Theorem \ref{resInRigidFilter}. 

\begin{corollary}
Let $P_X$ be the augmented face poset of an acyclic regular CW complex $X$.  Let $\beta$ be the vector of total Betti numbers for $P_X$.  If $P_X$ is a finite atomic lattice and $Q > P_X\in \mathcal{L}(n)_{\beta}$ then $Q$ has a minimal resolution supported on $X$.
\end{corollary}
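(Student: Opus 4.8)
The plan is to chain together three results already in hand: Proposition \ref{faceRigid} to establish rigidity of $P_X$, Proposition 6.5 in \cite{mapes} to describe the minimal resolution of $P_X$ cellularly, and Theorem \ref{resInRigidFilter} to transport that description up the Betti stratum to $Q$. The corollary is advertised as ``easy,'' and indeed the entire content is in identifying which hypothesis feeds which earlier theorem; no new homological computation should be needed.

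First I would check that the hypotheses of Proposition \ref{faceRigid} are met verbatim: $P_X$ is the augmented face poset of the acyclic regular CW complex $X$ and is assumed to be a finite atomic lattice, so Proposition \ref{faceRigid} gives that $P_X$ is rigid. Since $Q > P_X$ with both lying in the common Betti stratum $\mathcal{L}(n)_\beta$, Corollary \ref{rigidFilter} then shows $Q$ is rigid as well, so we are exactly in the situation governed by Theorem \ref{resInRigidFilter}. Next, Proposition 6.5 in \cite{mapes} tells us the minimal resolution of $P_X$ is supported on $X$, i.e. it is realized as the cellular resolution whose free modules are indexed by the cells of $X$ and whose differential is the multigraded cellular boundary. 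Applying Theorem \ref{resInRigidFilter} to the pair $P_X < Q$, the minimal resolution of $Q$ is isomorphic to that of $P_X$.

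The only genuinely delicate step is the last one: arguing that an isomorphism of minimal resolutions carries with it the property of being \emph{supported on $X$}, rather than merely matching abstract isomorphism type. Here I would unwind the meaning of ``supported on $X$'': the underlying complex of free modules, with basis indexed by the cells of $X$ and differential governed by the incidence numbers of $X$, is shared by both ideals, and only the multigraded labels attached to the basis elements change as one passes from $P_X$ to $Q$. The isomorphism produced in the proof of Theorem \ref{resInRigidFilter} arises precisely from the join-preserving relabeling map $f:Q \rightarrow P_X$ (through Theorem 3.3 in \cite{GPW}), which rewrites the monomial labels but leaves the combinatorial structure of the differential intact. Consequently the basis of the minimal resolution of $Q$ inherits an indexing by the cells of $X$ with differential matching the cellular boundary of $X$, which is exactly the assertion that $Q$ has a minimal resolution supported on $X$.

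I expect the main obstacle to be this transfer of cellular support across the isomorphism: one must separate the combinatorial (cellular) skeleton of the resolution from its multigraded decoration and verify that Theorem \ref{resInRigidFilter} preserves the former while only altering the latter. Because the relabeling $f$ fixes the underlying incidence data and merely renames multidegrees, this should go through cleanly, but it is the point at which the proof genuinely uses the specific form of the isomorphism rather than its abstract existence.
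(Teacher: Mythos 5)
Your proposal follows exactly the paper's argument: Proposition \ref{faceRigid} gives rigidity of $P_X$, Theorem \ref{resInRigidFilter} transfers the minimal resolution between $P_X$ and $Q$, and Proposition 6.5 of \cite{mapes} supplies the cellular support, which is precisely the chain the paper uses. Your added discussion of why ``supported on $X$'' survives the isomorphism --- that the multigraded isomorphism coming from the join-preserving relabeling $f$ changes only the monomial labels on the cells, not the incidence structure of the differential --- is a correct elaboration of a step the paper's terse proof leaves implicit.
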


\begin{proof}
By Proposition \ref{faceRigid} we see that $P_X$ is rigid.  Thus, Theorem \ref{resInRigidFilter} guarantees that the minimal resolution of $P_X$ will give a minimal resolution of $Q$.   Since the minimal resolution of $P_X$ is supported on $X$ by Proposition 6.5 in \cite{mapes} then the minimal resolution of $Q$ is also supported on $X$. 
\end{proof}

\section{Directions for future work}

The following example shows the limitations of resolving dispersed rigid monomial ideals 
by obtaining resolution information from concentrated rigid ideals appearing lower in the same Betti stratum.  Indeed, for a given ideal, such comparable ideals need not exist.  We are unaware of any conditions which guarantee the existence of such a comparable concentrated rigid ideal.  

\begin{example}\label{2squares}
The rigid monomial ideal $M=\langle bd,cd^2,ac,c^2d,ab\rangle$ is minimally 
resolved on the 3-dimensional regular CW complex $X$ pictured in Figure \ref{RigidDispersedCWposet}, 
where the multidegree of each vertex matches that of the monomial appearing in the given ordered 
list of generators.  Note that $X$ has $(1,5,7,4,1)$ as its face vector.  

The face poset of this cell complex, $P_X$, is not a meet semi-lattice and therefore 
is not contained in any Betti stratum of $\mathcal{L}(5)$.  Furthermore, 
the lattice $\LCM(M)$ is $P_X \cup \{p= a_1 \vee a_3 \vee a_5\}$ where the $a_i$ 
are the atoms in $P_X$ corresponding to vertex $i$.  The element $p \in \LCM(M)$ 
does not correspond to a Betti degree and therefore $M$ is a dispersed rigid monomial 
ideal.  Since removing the element $p$ from $\LCM(M)$ produces a poset which is 
not a lattice, there is no finite atomic lattice less than $\LCM(M)$ which appears in the 
same stratum and is a rigid concentrated monomial ideal.  We therefore cannot apply 
the combination of Theorem \ref{resInRigidFilter} and Theorem \ref{concentratedLatticeLinear} 
to construct a resolution of $M$.  
\end{example}

\begin{figure}
\center
\begin{tikzpicture}[scale=1.5]
\draw[fill=lightgray](0,0,0) -- (1,0,0) -- (1,0,1) -- (0,0,1) -- cycle; 
\draw[fill=lightgray] (0,0,0) -- (0,1,0) -- (0,0,1) -- cycle; 
\draw[fill=lightgray] (0,0,0) -- (0,1,0) -- (1,0,0) -- cycle; 
\draw[fill] (0,0,0) circle (1pt) node[below]{$4$};
\draw[fill] (1,0,0) circle (1pt) node[right]{$3$};
\draw[fill] (0,1,0) circle (1pt) node[left]{$2$};
\draw[fill] (0,0,1) circle (1pt) node[left]{$1$};
\draw[fill] (1,0,1) circle (1pt) node[below]{$5$};
\draw (-1,1,0) node[above]{$X$};
\end{tikzpicture}
\caption{The regular CW complex of Example \ref{2squares}}\label{RigidDispersedCWposet}
\end{figure}
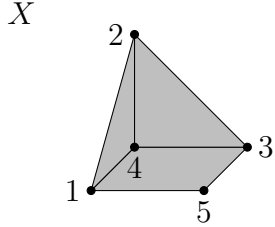

In actuality one may consider the following reinterpretation of Theorems \ref{concentratedLatticeLinear} and \ref{resInRigidFilter}.  Define $\beta(\LCM(M))$ to be the subposet of $\LCM(M)$ consisting of $p \in \LCM(M)$ such that $\beta_{i,b} \neq 0$ for some $i$.  Call this the Betti subposet of $\LCM(M)$.  In the situation where $M$ is concentrated rigid $\beta(\LCM(M))$ is the union of intervals $[\hat{0},p]$ in $\LCM(M)$ where $\beta_{i,p}$ is a Betti number.  Thus one can compute the ranks of the free modules by using either $\LCM(M)$ or $\beta(\LCM(M))$ as the pertinent information has not changed.  Notice too that if $M'$ is a rigid monomial ideal such that $\LCM(M') > \LCM(M)$ and $M'$ and $M$ are in the same Betti stratum then $\beta(\LCM(M)) = \beta(\LCM(M'))$.  So we can say that the minimal free resolution for both $M$ and $M'$ can be constructed as a poset resolution on $\beta(\LCM(M))$.  

We believe that this approach should be applicable to all rigid monomial ideals.  
In Example \ref{2squares} we can realize the minimal resolution as a poset resolution 
on $\beta(\LCM(M)) = P_X$.  Indeed, removing the element $p =  a_1 \vee a_3 \vee a_5$ 
does not change the homology of the order complex for intervals $(\hat{0}, q)$ where $q > p$ in $P_X$.  

\begin{question}
Is the minimal free resolution of any rigid monomial ideal $M$ a poset resolution on $\beta(\LCM(M)$?
\end{question}

\end{document}